\theoremstyle{plain}
\newtheorem{thrm}{Theorem}[section]
\newtheorem{rmrk}[thrm]{Remark}
\newtheorem{dfn}[thrm]{Definition}
\begin{document}

\newcommand{\SL}{\mathcal L^{1,p}( D)}
\newcommand{\Lp}{L^p( Dega)}
\newcommand{\CO}{C^\infty_0( \Omega)}
\newcommand{\Rn}{\mathbb R^n}
\newcommand{\Rm}{\mathbb R^m}
\newcommand{\R}{\mathbb R}
\newcommand{\Om}{\Omega}
\newcommand{\Hn}{\mathbb H^n}
\newcommand{\aB}{\alpha B}
\newcommand{\eps}{\ve}
\newcommand{\BVX}{BV_X(\Omega)}
\newcommand{\p}{\partial}
\newcommand{\IO}{\int_\Omega}
\newcommand{\bG}{\boldsymbol{G}}
\newcommand{\bg}{\mathfrak g}
\newcommand{\bz}{\mathfrak z}
\newcommand{\bv}{\mathfrak v}
\newcommand{\Bux}{\mbox{Box}}
\newcommand{\e}{\ve}
\newcommand{\X}{\mathcal X}
\newcommand{\Y}{\mathcal Y}
\newcommand{\W}{\mathcal W}

\numberwithin{equation}{section}

\newcommand{\RN} {\mathbb{R}^N}
\newcommand{\Sob}{S^{1,p}(\Omega)}
\newcommand{\Dxk}{\frac{\partial}{\partial x_k}}
\newcommand{\Co}{C^\infty_0(\Omega)}
\newcommand{\Je}{J_\ve}
\newcommand{\beq}{\begin{equation}}
\newcommand{\bea}[1]{\begin{array}{#1} }
\newcommand{\eeq}{ \end{equation}}
\newcommand{\ea}{ \end{array}}
\newcommand{\eh}{\ve h}
\newcommand{\Dxi}{\frac{\partial}{\partial x_{i}}}
\newcommand{\Dyi}{\frac{\partial}{\partial y_{i}}}
\newcommand{\Dt}{\frac{\partial}{\partial t}}
\newcommand{\aBa}{(\alpha+1)B}
\newcommand{\GF}{\psi^{1+\frac{1}{2\alpha}}}
\newcommand{\GS}{\psi^{\frac12}}
\newcommand{\HFF}{\frac{\psi}{\rho}}
\newcommand{\HSS}{\frac{\psi}{\rho}}
\newcommand{\HFS}{\rho\psi^{\frac12-\frac{1}{2\alpha}}}
\newcommand{\HSF}{\frac{\psi^{\frac32+\frac{1}{2\alpha}}}{\rho}}
\newcommand{\AF}{\rho}
\newcommand{\AR}{\rho{\psi}^{\frac{1}{2}+\frac{1}{2\alpha}}}
\newcommand{\PF}{\alpha\frac{\psi}{|x|}}
\newcommand{\PS}{\alpha\frac{\psi}{\rho}}
\newcommand{\ds}{\displaystyle}
\newcommand{\Zt}{{\mathcal Z}^{t}}
\newcommand{\XPSI}{2\alpha\psi \begin{pmatrix} \frac{x}{|x|^2}\\ 0 \end{pmatrix} - 2\alpha\frac{{\psi}^2}{\rho^2}\begin{pmatrix} x \\ (\alpha +1)|x|^{-\alpha}y \end{pmatrix}}
\newcommand{\Z}{ \begin{pmatrix} x \\ (\alpha + 1)|x|^{-\alpha}y \end{pmatrix} }
\newcommand{\ZZ}{ \begin{pmatrix} xx^{t} & (\alpha + 1)|x|^{-\alpha}x y^{t}\\
     (\alpha + 1)|x|^{-\alpha}x^{t} y &   (\alpha + 1)^2  |x|^{-2\alpha}yy^{t}\end{pmatrix}}
\newcommand{\norm}[1]{\lVert#1 \rVert}
\newcommand{\ve}{\varepsilon}

\title[Modica type gradient estimates, etc.]{Modica type gradient estimates for an inhomogeneous variant of the normalized $p$-Laplacian evolution}

\dedicatory{Dedicated to Enzo Mitidieri, on the occasion of his 60th birthday}

\author{Agnid Banerjee}
\address{Department of Mathematics\\University of California, Irvine \\
CA- 92697} \email[Agnid Banerjee]{agnidban@gmail.com}
\thanks{First author was supported in part by the second author's
NSF Grant DMS-1001317 and by a postdoctoral grant of the Institute Mittag-Leffler}

\author{Nicola Garofalo}
\address{Dipartimento di Ingegneria Civile, Edile e Ambientale (DICEA) \\ Universit\`a di Padova\\ 35131 Padova, ITALY}
\email[Nicola Garofalo]{rembdrandt54@gmail.com}
\thanks{Second author was supported in part by NSF Grant DMS-1001317 and by a grant of the University of Padova, ``Progetti d'Ateneo 2013"}

%
%
%
\keywords{}
\subjclass{}

\maketitle

\begin{abstract}
 In this paper, we  study  an inhomogeneous  variant of the normalized $p$-Laplacian evolution which has been recently  treated in \cite{BG1}, \cite{Do}, \cite{MPR} and \cite{Ju}.  We show that if the initial  datum satisfies the pointwise  gradient estimate \eqref{e:main1} a.e., then the unique solution to the Cauchy problem \eqref{main5}  satisfies  the same   gradient  estimate a.e. for all later times, see \eqref{e:main} below. A general pointwise  gradient bound for the entire bounded solutions of the elliptic counterpart of equation \eqref{main5} was first obtained in \cite{CGS}. Such estimate generalizes one obtained by L. Modica for the Laplacian, and it has connections to a famous conjecture of De Giorgi.
\end{abstract}

\section{Introduction}

Recently, there has been increasing attention about the equation of the so-called normalized $p$-Laplacian evolution
\begin{equation}\label{e:p}
|Du|^{2-p} \text{div}(|Du|^{p-2}Du)= u_t, \ \ \ \ \ \ \ \ \ 1<p<\infty,
\end{equation}
see \cite{BG1}, \cite{Do}, \cite{MPR}, \cite{Ju}, \cite{BG2} and \cite{JK}. The equation \eqref{e:p} is an evolution associated with the $p$-Laplacian that interpolates between the motion by mean curvature, which corresponds to the case $p=1$, and the heat equation, corresponding to $p=2$. In the interesting paper \cite{MPR} solutions to \eqref{e:p} have been characterized by asymptotic mean value properties. These properties are connected with the analysis of tug-of-war games with noise in which the number of rounds is bounded. The value functions for these games approximate a solution to the PDE \eqref{e:p} when the parameter that controls the size of possible steps go to zero.  The equation \eqref{e:p} also arises in image processing, see \cite{Do}, in which the Cauchy-Neumann problem was studied. In \cite{BG1} we constructed viscosity solutions to \eqref{e:p} and derived properties such as comparison principles for solutions of \eqref{e:p}, convergence of solutions as $p\to 1$, and the large-time behavior of solutions to a Cauchy-Dirichlet problem for \eqref{e:p}. We also proved unweighted energy monotonicity and a generalized Struwe's monotonicity formula. In the paper \cite{Ju} Juutinen studied the large-time behavior for $p> 2$ of solutions of \eqref{e:p}.
The case $p=\infty$ of the normalized $\infty$-Laplacian evolution was studied  in \cite{JK}. The equation  \eqref{e:p} has the advantage of being $1$-homogeneous but it has the serious disadvantage of having a non-divergence structure.

In the present paper for a  given  $T > 0$ we consider the  following Cauchy problem in $\Rn \times [0, T]$ 
\begin{equation}\label{main5}
\begin{cases}
 |Du|^{2-p}\left\{\operatorname{div}(|Du|^{p-2}Du) - F'(u)\right\} = u_t,  
\\
u(\cdot, 0)=g.
\end{cases}
\end{equation}
We suppose that $F\in C^{2,\beta}_{loc}(\R)$ for some $\beta > 0$  and $F\ge 0$.
Throughout this paper we assume $ 1 < p  \leq 2 $. We observe that, because of its non-divergence structure, when $F\not\equiv 0$ the equation \eqref{main5} does not make sense for $p >2$. As a consequence, in the case $p>2$ it presently remains an interesting open question what is the right evolution for which results similar to those in this paper can be established.

The equation in \eqref{main5} can  be considered as the parabolic counterpart of 
\begin{equation}\label{mainell}
 \text{div} (|Du|^{p-2}Du)= F'(u),
\end{equation}
which is a special case of the class of equations $\operatorname{div}(\Phi'(|Du|^2) Du) = F'(u)$ treated in \cite{CGS}. As a consequence of the results in \cite{CGS}, it follows that entire bounded (weak) solutions to \eqref{mainell} satisfy the following pointwise gradient estimate
\begin{equation}\label{est60}
|Du|^{p} \leq \frac{p}{p-1} F(u).
\end{equation}
We recall that in the linear case $p=2$ the estimate \eqref{est60} was first proved by L. Modica in  \cite{Mo}.
The estimate \eqref{est60} (in fact, a generalization of it) was employed in \cite{CGS} to provide a partial answer to a famous conjecture of De Giorgi (also known as the $\ve$-version of the Bernstein theorem for minimal graphs) asserting that entire solutions to
\begin{equation}
\Delta u = u^3 - u,
\end{equation}
such that $|u|\le 1$ and $\frac{\p u}{\p x_n} >0$, must be one-dimensional, i.e., must 
have level sets which are hyperplanes, at least in dimension $n\le 8$. In \cite{CGS} the estimate \eqref{est60} was also used to establish a result on the propagation of the zeros of a solution to \eqref{mainell}. 
We recall that the conjecture of De Giorgi has been fully solved for $n=2$ in \cite{GG} and $n=3$ in \cite{AC}, and it is known to fail for $n\ge 9$, see \cite{dPKW}. For $4 \le n \le 8$ it is still an open question. Additional fundamental progress on De Giorgi's conjecture is contained in the papers \cite{GG2}, \cite{Sa}. 

In this paper, we study the parabolic analogue of the Modica type gradient estimate \eqref{est60}.  Before stating our main results, we introduce the relevant class of solutions for the Cauchy problem \eqref{main5}:
\begin{equation*}\label{d:h}
H_T  = \{u \in C(\Rn \times [0,T])\mid x \to u(x, t)\in C^{0,1}(\Rn), ||u||_{L^{\infty}(\Rn \times [0, T])}, ||Du||_{L^{\infty}(\Rn \times [0, T])}  < \infty\}.
\end{equation*}
The notation $C^{0,1}(\Om)$ indicates the class of Lipschitz continuous functions on a given open set $\Om\subset \Rn$. 
The following is our main result.

\begin{thrm}\label{1d}
Let $g\in C^{0,1}(\Rn)$ with $||g||_{L^{\infty}(\Rn)}, ||Dg||_{L^{\infty}(\Rn)}  < \infty$. Moreover, corresponding to $g$, we assume that $F$ satisfies the assumption \eqref{assump} below. Then, for every $T>0$ there exists a unique solution $u$ to the Cauchy problem \eqref{main5} in the class $H_T$. Furthermore, if the initial datum $g$  satisfies the following gradient estimate for a.e. $x\in \Rn$
\begin{equation}\label{e:main1}
 |Dg(x)|^{p} \leq \frac{p}{p-1} F(g(x)),
\end{equation}
 then, at any given time $t > 0$ one has for a.e. $x\in \Rn$
\begin{equation}\label{e:main}
|Du(x, t)|^{p} \leq  \frac{p}{p-1} F(u(x, t)).
\end{equation}
\end{thrm}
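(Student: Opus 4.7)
The plan is to adapt the classical Bernstein/Modica technique to the parabolic non-divergence setting. One introduces the auxiliary ``$P$-function''
\[
P(x,t) \;=\; |Du(x,t)|^{p} \;-\; \frac{p}{p-1}\,F(u(x,t)),
\]
and the goal becomes showing that $P\le 0$ on $\mathbb R^{n}\times(0,T]$ given that $P(\cdot,0)\le 0$. This will follow from a parabolic maximum principle once one derives a differential inequality of the form $P_{t}-\mathcal LP\le b\cdot DP+cP$ for an elliptic operator $\mathcal L$ and locally bounded coefficients.

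Because the normalized $p$-Laplacian
\[
|Du|^{2-p}\operatorname{div}(|Du|^{p-2}Du)\;=\;\Delta u+(p-2)\,|Du|^{-2}\,D^{2}u(Du,Du)
\]
is singular where $Du=0$ (and $F'(u)|Du|^{2-p}$ is singular there as well when $p<2$), I would first regularize: replace $|Du|$ by $(|Du|^{2}+\ve^{2})^{1/2}$ in every occurrence, and mollify the initial datum to $g^{\ve}\in C^{\infty}(\Rn)$ in such a way that the gradient estimate \eqref{e:main1} is preserved modulo $o(1)$ as $\ve\to 0$. Standard quasilinear parabolic theory then yields smooth classical solutions $u^{\ve}$ to the regularized Cauchy problem on $\Rn\times[0,T]$, with $L^{\infty}$-bounds on $u^{\ve}$ and $Du^{\ve}$ uniform in $\ve$; by the stability and viscosity solution framework developed in \cite{BG1}, one has $u^{\ve}\to u$ locally uniformly as $\ve\to 0$.

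For each smooth $u^{\ve}$, set $P^{\ve}=(|Du^{\ve}|^{2}+\ve^{2})^{p/2}-\frac{p}{p-1}F(u^{\ve})$ and execute the Bernstein computation: differentiate the regularized PDE in $x_{k}$, multiply by $p(|Du^{\ve}|^{2}+\ve^{2})^{p/2-1}u^{\ve}_{k}$, sum over $k$, and subtract $\frac{p}{p-1}F'(u^{\ve})u^{\ve}_{t}$ (using the equation itself to re-express $u^{\ve}_{t}$). After collecting terms one arrives at an inequality of the type
\[
P^{\ve}_{t}-\mathcal L^{\ve}P^{\ve}\;\le\;b^{\ve}\cdot DP^{\ve}+c^{\ve}P^{\ve}+o_{\ve}(1),
\]
where $\mathcal L^{\ve}$ is the linearization of the regularized operator (uniformly elliptic on compact sets where $Du^{\ve}$ is bounded away from $0$ in the $\ve$-perturbed sense). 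The algebraic heart of the step is the sign of the third-order contributions: the terms of the form $|D^{2}u^{\ve}|^{2}$ generated by $\Delta|Du^{\ve}|^{p}$ must dominate the negative contribution $-p|Du^{\ve}|^{p-2}(D^{2}u^{\ve}\cdot Du^{\ve})\cdot Du^{\ve}_{t}$ arising through the PDE, and a matrix-level Cauchy--Schwarz inequality shows this happens precisely when the normalizing constant is $\frac{p}{p-1}$, in direct analogy with the elliptic identity in \cite{CGS}.

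Once the differential inequality is established, a barrier argument on $\Rn\times[0,T]$ (for instance, perturb by $\eta e^{\mu t}(1+|x|^{2})$, apply the parabolic maximum principle, and let $\eta\to 0$ using the global $L^{\infty}$-bounds on $u^{\ve}$ and $Du^{\ve}$) yields $P^{\ve}\le o_{\ve}(1)$ on the whole strip. Passing to the limit $\ve\to 0$, together with the locally uniform convergence $u^{\ve}\to u$ and the a.e.\ convergence of the gradients (which can be upgraded from weak-$\ast$ $L^{\infty}$ convergence via the uniform Lipschitz bound), gives the desired estimate \eqref{e:main} a.e. The principal obstacle will be the Bernstein computation itself: differentiating the non-divergence form equation introduces third-derivative terms that must be reabsorbed using the PDE, and one must show that the ``bad'' cross terms vanish or carry the right sign with the specific weight $p(|Du|^{2}+\ve^{2})^{p/2-1}$; it is here that the sharp constant $\frac{p}{p-1}$ is forced, mirroring---but genuinely complicated by the presence of the time derivative and the normalization factor $|Du|^{2-p}$---the delicate calculation at the core of \cite{CGS}.
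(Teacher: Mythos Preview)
Your overall strategy---regularize, define a $P$-function, run a Bernstein computation, apply a barrier/maximum-principle argument on $\Rn\times[0,T]$, then pass to the limit---is exactly the route the paper takes. The paper packages the Bernstein step as a separate result (Theorem \ref{aux}) for the regularized equation, using the CGS-specific $P_\ve = \xi_\ve(|Du^\ve|^2) - 2F(u^\ve)$ with $\xi_\ve(s)=2s\phi'_\ve(s)-\phi_\ve(s)$ rather than your $(|Du^\ve|^2+\ve^2)^{p/2}-\tfrac{p}{p-1}F(u^\ve)$; this particular form makes the divergence-structure identities and the crucial cancellation of the $-2f^2/\Lambda$ term come out cleanly after the Cauchy--Schwarz step, so you should expect to be pushed toward it once you actually carry out the algebra. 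The paper's barrier is also tailored: it uses $w=P-\tfrac{M}{R}\sqrt{|x|^2+1}-\tfrac{ct}{R^{1/2}}$ together with the threshold $w>R^{-p/2}$, precisely because the Cauchy--Schwarz step produces a term $\tfrac{f\langle Du,DP\rangle}{|Du|^2\Lambda}$ with a genuine $|Du|^{-2}$ (not regularized), and the threshold forces $|Du|\gtrsim R^{-1/2}$ at the putative interior maximum so this term is controlled. Your generic barrier $\eta e^{\mu t}(1+|x|^2)$ does not obviously supply such a lower bound on $|Du|$ at the maximum, so this is a point where you would likely need to modify your argument.

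There is one actual gap in your limit step. You write that a.e.\ convergence of the gradients ``can be upgraded from weak-$*$ $L^\infty$ convergence via the uniform Lipschitz bound''; this is false in general (think of $u_n(x)=n^{-1}\sin(nx)$). The paper does not attempt pointwise convergence of $Du^\ve$. Instead it integrates the inequality $\tfrac{p-1}{p}|Du^\ve_k|^p \le F(u^\ve_k)+\delta+\gamma$ over a ball, uses weak lower semicontinuity of $\int|Du|^p$ on the left and uniform convergence on the right to pass to the limit in $\ve$ and then in $k$, and finally invokes the Lebesgue differentiation theorem to recover the pointwise (a.e.) bound. You should replace your limit argument with this lower-semicontinuity/Lebesgue-point device; once you do, your outline matches the paper's proof.
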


\begin{rmrk}\label{r}
The assumption \eqref{assump} below is used to  assert the existence of solutions  in the class $H_T$  via a regularization scheme described in the subsequent sections, see Remark \ref{assumpt1}. The hypothesis \eqref{assump} is however not needed when $1<p < 2$, see Remark \ref{assumpt2} below. In addition, such a regularization scheme  is also  crucially  employed to justify the computations in Section \ref{proofs}.  Now, when $p=2$, any solution in the  class $H_T$ is a classical solution, a fact  which follows from the  parabolic regularity theory. Hence, in this case one does not need to apply any further regularization scheme. In conclusion,  if we a priori assume that the solution $u$ belongs to the class $H_T$, then we obtain the following version of Theorem \ref{1d}.
\end{rmrk}

\begin{thrm}\label{1e}
Let $1<p\le 2$, and for some $0<T\le \infty$ let $u\in H_T$ be a solution to 
\begin{equation}
|Du|^{2-p}\left\{\operatorname{div}(|Du|^{p-2}Du) - F'(u)\right\} = u_t,
\end{equation}
where $F\in C^{2, \beta}_{loc}(\R)$ for some $\beta > 0$,  and $F\ge 0$.  If at some time level $t_{0}$  $u(\cdot,t_{0})$ satisfies \eqref{e:main}, then $u(\cdot,t)$ satisfies \eqref{e:main} for all $t_{0}\le t \le T$ ($t<\infty$ if $T=\infty$).
\end{thrm}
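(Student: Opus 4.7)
The plan is to introduce the parabolic Modica function
$$P(x,t) := |Du(x,t)|^p - \frac{p}{p-1} F(u(x,t)),$$
and to show that the hypothesis $P(\cdot, t_0) \le 0$ is propagated to $P(\cdot, t) \le 0$ for all $t \in [t_0, T]$ by a parabolic comparison principle. Because the operator $|Du|^{2-p}\operatorname{div}(|Du|^{p-2}Du)$ is singular on the critical set $\{Du = 0\}$ and since a solution in $H_T$ is only Lipschitz in space, the Bernstein-type computation on $P$ must be performed on a regularized problem in which $|Du|^{p-2}$ is replaced by $(|Du|^2 + \e^2)^{(p-2)/2}$ and $F$ is suitably mollified (the scheme alluded to in Remark \ref{r}). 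Standard parabolic theory for non-degenerate quasilinear equations yields a smooth solution $u_\e$; I would run the argument on the corresponding $P_\e$ and let $\e \to 0$ using the uniform bounds built into $H_T$.

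Working with the smooth regularized solution, the first step is to rewrite the PDE in non-divergence form as $u_t = A_{ij}(Du) D_{ij} u - F'(u)$, where
$$A_{ij}(q) = \delta_{ij} + (p-2)\frac{q_i q_j}{|q|^2}$$
is uniformly elliptic in the regularization. Differentiating once in $x_k$, multiplying by $p|Du|^{p-2} u_k$, and summing gives an expression for $\p_t |Du|^p$ in terms of $D^2 u$, $D^3 u$, and $F''(u)|Du|^2$. Subtracting $\frac{p}{p-1}F'(u) u_t$ and substituting $u_t$ back from the PDE, I would collect the Hessian terms via an algebraic identity in the spirit of the one used in \cite{CGS} for the elliptic case, so that the third-order contribution gets written as $A_{ij}D_{ij}P$ plus a first-order drift $b \cdot DP$ plus a manifestly nonpositive quadratic form in the components of $D^2 u$ "orthogonal to $Du$". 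The outcome should be a parabolic differential inequality of the form
$$P_t - A_{ij}(Du)\, D_{ij} P + b(x,t) \cdot DP \le 0$$
valid on the open set $\{|Du| > 0\}$.

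With this inequality in hand I would apply the parabolic weak maximum principle to $P$ on $\Rn \times [t_0, T]$. Since $u$ and $Du$ are uniformly bounded on $[0,T]$, so is $P$ from above; to rule out a positive supremum attained at infinity I would perturb by $\eta(|x|^2+1)^{1/2} + \eta t$ (or a comparable barrier), obtain a contradiction at an interior positive maximum, and then let $\eta \to 0$. At any point where $Du = 0$ the definition of $P$ gives $P = -\frac{p}{p-1} F(u) \le 0$ immediately since $F \ge 0$, so a putative positive supremum must lie in $\{|Du| > 0\}$, where the differential inequality holds and the principle applies. This yields $\sup_{\Rn} P(\cdot, t) \le \sup_{\Rn} P(\cdot, t_0) \le 0$ for $t_0 \le t \le T$, and passing $\e \to 0$ closes the argument.

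The main obstacle will be the Bernstein computation producing the parabolic inequality for $P$: organizing the third-order Hessian terms together with the $F''(u)|Du|^2$ contribution so that they combine into the linearized parabolic operator plus a first-order drift, with a leftover that is a genuine nonnegative quadratic form, is the parabolic counterpart of the elliptic $P$-function identity of \cite{CGS} and is the heart of the matter. A secondary difficulty is that the drift $b$ may blow up as $|Du| \to 0$; this is precisely why the entire computation is carried out on the regularized equation, where $(|Du|^2+\e^2)^{(p-2)/2}$ is smooth and the coefficients are bounded for each fixed $\e$, while only the pointwise inequality $P \le 0$ — which is stable under the locally uniform convergence $u_\e \to u$ available from the $H_T$ bounds — is passed to the limit.
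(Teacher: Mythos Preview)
Your proposal is correct and follows essentially the same route as the paper: the regularized $P$-function, the Bernstein computation leading to a linear parabolic inequality for $P$, the barrier at spatial infinity to run the maximum principle on $\Rn\times[t_0,T]$, and the passage $\ve\to 0$ are exactly the ingredients of Theorem \ref{aux} combined with the limiting argument in the proof of Theorem \ref{1d}. Two small points to watch: the non-divergence form of the equation is $u_t = A_{ij}(Du)D_{ij}u - |Du|^{2-p}F'(u)$, not $-F'(u)$ as you wrote, and for the computation to close at the regularized level the paper does not use $|Du^\ve|^p$ directly but the $\ve$-adapted quantity $\xi_\ve(|Du^\ve|^2)=2|Du^\ve|^2\phi_\ve' - \phi_\ve$ with $\phi_\ve(s)=\tfrac{2}{p}(\ve^2+s)^{p/2}$, which is what makes the cancellations in \eqref{e:100}--\eqref{e:65} go through.
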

\begin{rmrk}
Note that  unlike the hypothesis in  Theorem \ref{1d}, in Theorem \ref{1e} we do not require that $F$ satisfy \eqref{assump}. See Remark \ref{r} above.
\end{rmrk}

Theorem \ref{1d} and Theorem \ref{1e} can be considered as a parabolic  analogue in the case $1 < p \leq 2$ of the above mentioned  result in \cite{CGS} which states that  an   entire bounded solution to \eqref{mainell} satisfies the estimate \eqref{est60} except that in our situation we are only able to  assert that the estimate  \eqref{e:main} holds a.e. in $\Rn$. It remains an open question as to whether  the solution $u$ in Theorem \ref{1d} has higher regularity so that one can assert that the estimate \eqref{e:main} holds pointwise  everywhere. In the next result we show that, under an additional assumption on the initial datum $g$, this is true when $n=2$.

\begin{thrm}\label{2d}
Let $n=2$, and let $u$, $g$ be as in Theorem \ref{1d}. Furthermore, if  the initial datum $g$ has bounded  derivatives up to order two there exists $\alpha\in (0,1)$ depending only on $p$ such the solution $u(\cdot,t) \in C^{1,\alpha}$ for every $t>0$. Consequently, the gradient estimate \eqref{e:main} holds pointwise everywhere.
\end{thrm}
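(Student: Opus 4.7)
The plan is to run the regularization scheme that already appears in the paper and produces classical approximating solutions $u_{\varepsilon}$, then upgrade the bounds available in dimension $n=2$ to uniform $C^{1,\alpha}$ estimates, and finally invoke continuity to promote the a.e. inequality of Theorem \ref{1d} to a pointwise one. The extra $W^{2,\infty}$ hypothesis on $g$ is used to propagate second-order information from $t=0$ into the evolution.

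First, I would replace the original Cauchy problem \eqref{main5} with the regularized version used earlier, where the factors $|Du|^{2-p}$ and $|Du|^{p-2}$ are smoothed out by $(\varepsilon+|Du|^{2})^{(2-p)/2}$ and $(\varepsilon+|Du|^{2})^{(p-2)/2}$, and the initial datum is replaced by a mollification $g_\varepsilon$. Thanks to the uniform bound on $D^{2}g$, the family $\{g_{\varepsilon}\}$ is bounded in $W^{2,\infty}(\mathbb R^{2})$. Classical quasilinear parabolic theory gives smooth solutions $u_{\varepsilon}$, and comparison arguments plus differentiation of the equation in $x_k$ and $t$ (already implicit in the existence theory quoted in Remark \ref{r}) yield bounds on $u_\varepsilon$, $Du_\varepsilon$, and on the initial trace of $D^{2}u_{\varepsilon}$ which are uniform in $\varepsilon$.

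Next, I would exploit that the normalized $p$-Laplacian splits as
\begin{equation*}
|Du|^{2-p}\operatorname{div}(|Du|^{p-2}Du)\;=\;\Delta u+(p-2)\,\frac{\langle D^{2}u\,Du,Du\rangle}{|Du|^{2}},
\end{equation*}
so that, after regularization, the coefficients of the non-divergence operator are bounded and uniformly elliptic with ellipticity constants depending only on $p$ (and on $\|Du_\varepsilon\|_\infty$). Differentiating the regularized equation in $x_k$, the functions $w^{k}_{\varepsilon}=\partial_{x_k}u_{\varepsilon}$ satisfy a linear uniformly parabolic equation in non-divergence form, whose coefficients are bounded measurable functions of $Du_{\varepsilon}$ and whose zeroth-order source is controlled by $|F''(u_\varepsilon)|\,|Du_\varepsilon|$. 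The key dimensional input is that in $n=2$ one has $C^{1,\alpha}$-regularity up to the initial time for such equations: either via the 2D Morrey embedding $W^{2,q}\hookrightarrow C^{1,\alpha}$ (reached through $L^{q}$ estimates on $D^{2}u_{\varepsilon}$ with $q>2$), or via 2D-specific non-divergence results in the spirit of DiBenedetto--Friedman/Morrey. The $W^{2,\infty}$ hypothesis on $g$ is precisely what is needed to control the parabolic initial data of $w^{k}_{\varepsilon}$, and thereby to obtain a uniform-in-$\varepsilon$ bound of the form $\|u_\varepsilon(\cdot,t)\|_{C^{1,\alpha}(\mathbb R^{2})}\le C(T,p,g,F)$ for all $t\in(0,T]$.

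With this uniform estimate at hand, Arzelà--Ascoli produces a subsequence converging in $C^{1,\alpha'}_{loc}$ for any $\alpha'<\alpha$; by uniqueness in the class $H_{T}$ (Theorem \ref{1d}) the limit is the solution $u$, and $u(\cdot,t)\in C^{1,\alpha}$ for every $t>0$. Since $u$ and $Du$ are then continuous and $F\in C^{2,\beta}_{loc}$, both sides of $|Du|^{p}\le \tfrac{p}{p-1}F(u)$ are continuous functions of $x$, so the a.e. estimate provided by Theorem \ref{1d} extends pointwise. The main obstacle is Step three: obtaining the $C^{1,\alpha}$ bound \emph{uniformly} in the regularization parameter, because the non-divergence term $(p-2)|Du|^{-2}\langle D^{2}u\,Du,Du\rangle$ degenerates at the critical set $\{Du=0\}$, and it is precisely the two-dimensional structure that rescues a parameter-free Hölder bound for the first derivatives without the restrictive range of $p$ that would be needed by Cordes-type arguments in higher dimension.
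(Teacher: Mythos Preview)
Your overall strategy---regularize, obtain uniform $C^{1,\alpha}$ bounds in two dimensions, pass to the limit, then upgrade the a.e.\ estimate by continuity---matches the paper. But the route you choose to the $C^{1,\alpha}$ bound is more convoluted than necessary, and the specific path via differentiation in $x_k$ has a gap.

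The paper's argument is shorter. The $W^{2,\infty}$ hypothesis on $g$ is used solely to obtain a uniform bound on $u^{\varepsilon}_{t}$ (by differentiating the approximating equation in $t$ and applying the maximum principle, as in the existence section). Once $|u^{\varepsilon}_{t}|\le C$ uniformly in $\varepsilon$, one \emph{freezes the time variable}: at each fixed $t$, $u^{\varepsilon}(\cdot,t)$ solves the two-dimensional uniformly elliptic non-divergence equation
\[
a^{\varepsilon}_{ij}(Du^{\varepsilon})\,u^{\varepsilon}_{ij}=u^{\varepsilon}_{t}+(\varepsilon^{2}+|Du^{\varepsilon}|^{2})^{1-p/2}f(u^{\varepsilon}),
\]
whose right-hand side is bounded independently of $\varepsilon$. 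Theorem~12.4 in Gilbarg--Trudinger (the classical two-dimensional $C^{1,\alpha}$ estimate for non-divergence uniformly elliptic equations with merely bounded measurable coefficients, going back to Morrey--Nirenberg--Talenti) then yields $\|u^{\varepsilon}(\cdot,t)\|_{C^{1,\alpha}}\le C$ with $\alpha=\alpha(p)$, uniformly in $\varepsilon$. No differentiation in space, no parabolic De Giorgi--Nash theory for derivatives, is required.

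By contrast, your main route---differentiate in $x_k$ and treat $w^{k}_{\varepsilon}=\partial_{x_k}u_{\varepsilon}$ as the solution of a linear parabolic equation---runs into the difficulty that the resulting equation is \emph{not} one with coefficients depending only on $Du_{\varepsilon}$. Differentiating $a^{\varepsilon}_{ij}(Du_{\varepsilon})u^{\varepsilon}_{ij}$ in $x_k$ produces a first-order term for $w^{k}_{\varepsilon}$ whose coefficient involves $D^{2}u_{\varepsilon}$, which you have not shown to be uniformly bounded. Your alternative suggestion of reaching $C^{1,\alpha}$ through $W^{2,q}$ estimates plus the Morrey embedding is closer in spirit to the paper's approach, but you do not explain how to obtain the $L^{q}$ bound on $D^{2}u_{\varepsilon}$; the clean way is precisely to invoke the two-dimensional elliptic theory at each time slice once $u^{\varepsilon}_{t}$ is controlled.
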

 
We conclude with an application of the estimate \eqref{e:main}. The following result can be thought of as theorem on the propagation of zeros for solutions of the Cauchy problem \eqref{main5}.

\begin{thrm}\label{3d}
Suppose that the initial datum $g$ satisfies \eqref{e:main1}, and let $u$ be the solution as in Theorem \ref{1d}.   If $  F(u( x_{0}, t_{0})) = 0 $ for some point  $(x_{0}, t_{0})$, then $u(\cdot, t_{0})$ is constant.
\end{thrm}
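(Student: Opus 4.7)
The plan is to combine the almost-everywhere gradient estimate \eqref{e:main} from Theorem~\ref{1d} with a one-dimensional Gronwall-type argument along generic rays issuing from $x_0$. Set $c:=u(x_0,t_0)$; the hypothesis is $F(c)=0$, so since $F\ge 0$ and $F\in C^{2,\beta}_{\operatorname{loc}}(\R)$ the value $c$ is a local minimum of $F$ and $F'(c)=0$. Moreover, for every compact $K\subset\R$, minimizing the Taylor inequality $0\le F(s+h)\le F(s)+F'(s)h+\tfrac12\|F''\|_{L^\infty(K)}h^2$ in $h$ yields the universal bound $|F'(s)|^2\le 2\|F''\|_{L^\infty(K)}F(s)$, which I shall apply on $K=[-\|u\|_\infty,\|u\|_\infty]$. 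By Theorem~\ref{1d} there is a null set $N\subset\Rn$ off which both the gradient estimate $|Du(\cdot,t_0)|^p\le\tfrac{p}{p-1}F(u(\cdot,t_0))$ and the differentiability of the Lipschitz function $u(\cdot,t_0)$ hold; polar Fubini at $x_0$ then shows that for a.e.\ $\omega\in S^{n-1}$ the line $\ell_\omega=\{x_0+r\omega:r\in\R\}$ meets $N$ in a $1$-dimensional null set. For any such $\omega$, the slice $v(r):=u(x_0+r\omega,t_0)$ is Lipschitz, $v(0)=c$, and a chain-rule argument gives $|v'(r)|^p\le\tfrac{p}{p-1}F(v(r))$ for a.e.\ $r\in\R$.

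The heart of the argument is to show that any such $v$ is identically $c$. Put $\phi(r):=F(v(r))$, a continuous function on $\R$ with $\phi(0)=0$. Chaining the two pointwise estimates yields, a.e.,
\[
|\phi'(r)|\le |F'(v(r))|\,|v'(r)|\le C_0\,\phi(r)^{\alpha},\qquad C_0:=\sqrt{2\|F''\|_{L^\infty(K)}}\,\bigl(\tfrac{p}{p-1}\bigr)^{1/p},\ \alpha:=\tfrac12+\tfrac1p.
\]
Since $1<p\le 2$ one has $\alpha\ge 1$. Suppose for contradiction $\phi(r_*)>0$ at some $r_*\ne 0$; by symmetry take $r_*>0$, and set $r_3:=\sup\{r\in[0,r_*]:\phi(r)=0\}$, so $\phi(r_3)=0$ and $\phi>0$ on $(r_3,r_*]$. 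When $\alpha=1$ (the case $p=2$), Gronwall on $[r_3+\e,r_*]$ gives $\phi(r_*)\le\phi(r_3+\e)\,e^{C_0(r_*-r_3)}\to 0$ as $\e\downarrow 0$, contradicting $\phi(r_*)>0$. When $\alpha>1$ (the case $1<p<2$), the inequality implies $|(\phi^{1-\alpha})'|\le C_0(\alpha-1)$ a.e.\ on $[r_3+\e,r_*]$, so $\phi^{1-\alpha}$ is uniformly Lipschitz there; yet $\phi^{1-\alpha}(r)\to+\infty$ as $r\to r_3^+$, again impossible. Hence $\phi\equiv 0$ on $\R$, and substituting back into the gradient estimate forces $v'\equiv 0$ a.e., so $v\equiv c$.

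Conclude: $u(\cdot,t_0)\equiv c$ along a.e.\ ray from $x_0$, a dense subset of $\Rn$; by continuity of $u(\cdot,t_0)$ then $u(\cdot,t_0)\equiv c$ on all of $\Rn$, which is the claim. The principal obstacle is the ODE dichotomy of the second paragraph, especially the case $\alpha>1$: this is precisely where the restriction $p\le 2$ is essential, since $p>2$ yields $\alpha<1$ and the barrier argument breaks down. The remaining ingredients---Fubini to pass to lines, Rademacher to use the chain rule, and the universal bound $|F'|^2\le 2\|F''\|_\infty F$---are routine.
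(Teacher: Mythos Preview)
Your proof is correct and takes a genuinely different route from the paper's. The paper explicitly avoids working with the a.e.\ estimate \eqref{e:main} on lines---indeed a remark following the proof stresses that the estimate need not hold on any given one-dimensional line---and instead goes back to the smooth approximations $u^\ve$ of Section~\ref{existence1}, for which the $P$-function estimate of Theorem~\ref{aux} holds pointwise. It then runs a local Gronwall argument on $\psi(s) = u(x_1+s\omega,t_0) - u(x_1,t_0)$ using the Taylor expansion $F(u) = O((u-u_0)^2)$ near the zero, obtaining $|\psi'|\le C|\psi|$ for small $s$; this shows the level set $\{u(\cdot,t_0) = u_0\}$ is open (and closed), hence all of $\Rn$. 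You instead bypass the regularization entirely by polar Fubini, which selects a full-measure set of directions along which the a.e.\ estimate \emph{does} restrict; you then work globally with the Glaeser-type bound $|F'|^2\le 2\|F''\|_\infty F$ and the auxiliary function $\phi = F\circ v$, obtaining the ODE inequality $|\phi'|\le C_0\phi^\alpha$ with $\alpha = \tfrac12+\tfrac1p\ge 1$. Your approach is more elementary in that it needs no appeal to the $u^\ve$, and it makes transparent where the restriction $p\le 2$ enters (the superlinearity $\alpha\ge 1$); the paper's approach, by contrast, works along \emph{every} direction $\omega$ rather than almost every, which is what permits the open-closed argument in place of your density argument. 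One minor point: your inequality $|F'(s)|^2\le 2\|F''\|_{L^\infty(K)}F(s)$ requires the minimizing $h=-F'(s)/\|F''\|_{L^\infty(K)}$ to keep $s+h$ inside the set where the second-order Taylor bound is valid, so $K$ should be taken slightly larger than $[-\|u\|_\infty,\|u\|_\infty]$; this is harmless.
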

\noindent \textbf{Acknowledgment:} The paper was finalized  during the first author's stay at the Institut Mittag-Leffler during the semester long program \emph{Homogenization and Random Phenomenon}. The first  author would like to thank the Institute and the organizers of the program for the kind hospitality and the  excellent working conditions.

\section{Preliminaries}

Suppose that $u$ be a solution to the equation \eqref{main5}. We begin by observing that, after some formal computations, we have the following equation in non-divergence form 
\begin{equation}\label{e:inhom}
\left(\delta_{ij} + (p-2)\frac{u_i u_j}{|Du|^2}\right)u_{ij}=  |Du|^{2-p} f(u) +  u_{t},  
\end{equation}
where  $f = F'$ (see \cite{BG1} for similar formal computations in the homogeneous case $F\equiv 0$). 
Following  [CGG], we  now introduce  the  following notion of viscosity solution to the equation in  \eqref{main5}.

\begin{dfn}\label{D: def1}
A   function  $u\in C(\Rn \times [0, T))\cap L^{\infty}(\Rn \times  [0, T))$ is called a \emph{viscosity subsolution} of \eqref{e:inhom},  provided that for every $\phi \in  C^{2}(\Om \times (0, T))$ such that
\begin{equation}
u - \phi\quad  \text{has a local maximum at}\quad   z_0 \in \Om \times  (0, T),
\end{equation}
then either
\begin{equation}
\begin{cases}
\phi_t  + |D\phi|^{2-p} f(u) 

 \leq \left(\delta_{ij} + (p-2)  \frac{ \phi_{i}\phi_{j}}{|D\phi|^{2}}\right)\phi_{ij}\quad\ \ \text{at}\quad z_0,
\\
\text{if}\   D\phi(z_0) \not= 0,
\end{cases}
\end{equation}
or
\begin{equation}
\begin{cases}
\underset{|a| = 1}{\inf}\ \left\{\phi_t  + |D\phi|^{2-p} f(u)- (\delta_{ij}  +  (p-2)  a_{i}a_{j})\phi_{ij}\right\}  \leq 0\ \ \ \text{at}\ z_0,
\\
\text{if}\   D\phi(z_0) = 0.
\end{cases}
\end{equation}
A function $u$ is a viscosity supersolution if $v = - u$ is a viscosity subsolution. Finally, $u$ is a viscosity solution if it is at the same time a subsolution and a supersolution.
\end{dfn} 

Similarly to the case  $F = 0$, by arguing as in Proposition 2.8 in \cite{BG1} we  have the following equivalent definition.
\begin{dfn}\label{D:def2}
A   function  $u\in C(\Rn \times [0,T))\cap L^{\infty}(\Rn \times  [0,T))$ is called a \emph{viscosity subsolution} of \eqref{e:inhom},  provided that for every $\phi \in  C^{2}(\Om \times (0, T))$ such that
\begin{equation}
u - \phi\quad  \text{has a local maximum at}\quad   z_0 \in \Om \times  (0, T),
\end{equation}
then 
\begin{equation}
\begin{cases}
\phi_t  + |D \phi|^{2-p} f(u) \leq \left(\delta_{ij} + (p-2)  \frac{ \phi_{i}\phi_{j}}{|D\phi|^{2}}\right)\phi_{ij}\quad \text{at}\quad z_0,
\\
\text{if}\   D\phi(z_0) \not= 0,
\end{cases}
\end{equation}
or
\begin{equation}
\begin{cases}
\phi_t  +|D\phi|^{2-p}f(u) \leq \left(\delta_{ij} + (p-2)  a_{i}a_{j}\right)\phi_{ij}\quad \text{at} \quad z_0,\  
\\
\text{for some}\quad  a \in  \Rn\quad  \text{with}\quad  |a| \leq 1, \text{if}\   D\phi(z_0) = 0.
\end{cases}
\end{equation}
Analogous definitions for supersolutions, and for solution.
\end{dfn}

\section{Maximum modulus principle}

In this short section we establish  a maximum modulus theorem for viscosity solutions of  \eqref{main5} which will be needed subsequently.

\begin{thrm}\label{max mod}
Let $u$ and  $v$  be two  bounded  continuous solutions in $\Rn\times [0,T]$ to \eqref{main5}  which are  globally  Lipschitz in the space variable. Let 
\begin{equation}\label{e:150}
||u||_{L^{\infty}(\Rn \times (0, T))}, ||Du||_{L^{\infty}(\Rn \times (0, T))}, ||v||_{L^{\infty}(\Rn \times (0, T))}, ||Dv||_{L^{\infty}(\Rn \times (0, T))}  \leq C.
\end{equation}
Then, there exists a constant  $M = M(C)$ such that 
\begin{equation}\label{e:151}
|| u - v||_{L^{\infty}(\Rn \times (0, T))} \leq e^{M T} || u(\cdot, 0) - v(\cdot, 0)||_{L^{\infty}(\Rn \times (0, T))}.
\end{equation}
\end{thrm}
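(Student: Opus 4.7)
\emph{Plan of proof.} The argument will follow the standard doubling of variables strategy from viscosity solution theory. By symmetry, it suffices to establish the one-sided bound on $u-v$. The plan is to consider the weighted difference $e^{-Mt}(u(x,t) - v(x,t))$ with a constant $M = M(C) > 0$ to be chosen later, and to show that its supremum over $\Rn\times[0,T]$ does not exceed its supremum at $t = 0$. I will argue by contradiction, assuming that this fails with a strictly positive gap $\sigma > 0$.

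To deal with the non-compactness of $\Rn$, I will introduce the penalized functional
\begin{equation*}
\Phi(x, y, t, s) = e^{-Mt} u(x, t) - e^{-Ms} v(y, s) - \frac{|x-y|^2 + (t-s)^2}{2\varepsilon} - \delta(|x|^2 + |y|^2),
\end{equation*}
which, thanks to the $L^\infty$ bound \eqref{e:150}, attains its maximum at some point $(\bar x, \bar y, \bar t, \bar s)$. Standard penalty estimates will yield $|\bar x - \bar y|^2/\varepsilon, (\bar t - \bar s)^2/\varepsilon \to 0$ as $\varepsilon \to 0$, while choosing $\delta, \varepsilon$ sufficiently small ensures that the maximum value remains bounded below by $\sigma/2$ and that $\bar t, \bar s > 0$; otherwise the initial-data hypothesis and continuity of $u, v$ would rule out a positive gap.

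Next I will invoke the parabolic Crandall--Ishii lemma to produce parabolic sub/superjets of $u$ at $(\bar x, \bar t)$ and of $v$ at $(\bar y, \bar s)$ with spatial gradients
\begin{equation*}
\xi_u = e^{M\bar t}\Bigl(\tfrac{\bar x - \bar y}{\varepsilon} + 2\delta \bar x\Bigr), \qquad \xi_v = e^{M\bar s}\Bigl(\tfrac{\bar x - \bar y}{\varepsilon} - 2\delta \bar y\Bigr),
\end{equation*}
together with a matrix inequality controlling the associated Hessians. Plugging these into the subsolution inequality for $u$ and the supersolution inequality for $v$ from Definition \ref{D:def2}, and subtracting, the Hessian contributions will be absorbed by the Crandall--Ishii matrix bound up to an error vanishing as $\varepsilon \to 0$. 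The time derivatives will produce the leading positive contribution $M\bigl[e^{-M\bar t}u(\bar x,\bar t) - e^{-M\bar s}v(\bar y,\bar s)\bigr] \ge M\sigma/2$, while the nonlinear terms $|\xi_u|^{2-p} f(u(\bar x,\bar t)) - |\xi_v|^{2-p} f(v(\bar y,\bar s))$ are controlled using that $f=F'$ is Lipschitz on $[-C,C]$ and that $|\xi_u|, |\xi_v|$ stay bounded in terms of $C$ as $\varepsilon,\delta \to 0$ (via the Lipschitz bound on $u,v$). Choosing $M$ larger than the resulting effective Lipschitz constant will yield the contradiction, and reversing the roles of $u$ and $v$ gives the matching lower bound, completing \eqref{e:151}.

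\emph{Main obstacle.} The technical difficulty lies in the degenerate/singular nature of the diffusion matrix $\delta_{ij} + (p-2) u_i u_j / |Du|^2$ and of the coefficient $|Du|^{2-p}$ when the test-function gradient vanishes. The hypothesis $1 < p \le 2$ is helpful: the diffusion eigenvalues stay in $[p-1, 1]$ and $|\xi|^{2-p}$ remains bounded (in fact vanishes if $p<2$). Nevertheless, the case $\xi_u = 0$ or $\xi_v = 0$ must be handled separately via the second alternative in Definition \ref{D:def2}, by picking an auxiliary $a \in \Rn$ with $|a| \le 1$; the uniform bound on the matrix $I + (p-2)a\otimes a$ ensures that the second-order contribution is still controlled and that the contradiction argument above goes through without modification.
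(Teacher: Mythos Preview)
Your approach is sound and would yield \eqref{e:151}, but it differs from the paper's route. Rather than carrying out the full doubling-of-variables argument, the paper proceeds by reduction to an existing comparison theorem: it replaces $F$ by a compactly supported $G\in C^2(\R)$ with $G=F$ on $[-2C-1,2C+1]$, and replaces the coefficient $|Dw|^{2-p}$ by a bounded function $Q$ agreeing with $|\cdot|^{2-p}$ on $\{|Dw|\le 2C\}$. The key observation is that, thanks to the Lipschitz bound \eqref{e:150}, any test function touching $u$ or $v$ satisfies $|D\phi|\le C$ at the contact point, so $u$ and $v$ are also viscosity solutions of the truncated equation; that equation now fits the hypotheses of Theorem~4.1 in \cite{GGIS} (with the exponential-in-time refinement of Theorem~1.2.1 in \cite{Zh}), and \eqref{e:151} follows by citation. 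Your proof is essentially what those references carry out internally, done by hand: it is more self-contained, while the paper's version is shorter because it outsources the Crandall--Ishii machinery. Both arguments lean on the Lipschitz hypothesis in the same essential way---in your setup, to keep $\xi_u,\xi_v$ bounded; in the paper's, to justify the truncation of $Q$.
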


\begin{proof}
First, we let $G\in C^2(\R)$ be a compactly supported  real-valued function such that $G(w)= F(w)$ when $|w| \leq 2 C + 1$. Let now $\phi$ be a test function such that $u- \phi$ has a local extremum at  a point  $z_0 = (x_0, t_0)$. From  \eqref{e:150}, it follows that $|D\phi| \leq C$, and a similar  conclusion is also true when $u$ is replaced by $v$. Therefore, if we  define  $Q(y)= |y|^{2-p}$ if $|y| \leq 2C$ and $Q(y)= 2^{2-p} C^{2-p}$ when  $|y| \geq 2C$,  we have  that both $u$ and $v$ are viscosity solutions to  
\begin{equation}
w_t + Q(Dw) G^{'}(w)= (\delta_{ij} + (p-2) \frac{w_i w_j}{|Dw|^2})w_{ij}.
\end{equation}
This equation obeys the hypothesis of Theorem 4.1 in \cite{GGIS}.  As a consequence, \eqref{e:151} follows  from a slight modification of the arguments in the proof of Theorem 4.1 in \cite{GGIS} which can be found for instance in Theorem 1.2.1 in \cite{Zh}. Note that the modification is similar to the one employed for the  case  $F=0$ in proof of Theorem 3.4 in \cite{BG1}.

\end{proof}

\section{Existence of solutions}\label{existence1}

In this section we establish the solvability of the Cauchy problem \eqref{main5} when the initial datum $g\in C^{0,1}(\Rn)$, i.e., $g$ is globally Lipschitz and bounded. 
With this objective in mind, for any $\ve > 0$ we consider the approximating  Cauchy problem
\begin{equation}\label{e:ap1}
\begin{cases}
u^{\ve}_{t}  +  (\ve^2 + |Du^{\ve}|^2)^{1 - p/2} f(u^{\ve}) =  a^{\ve}_{ij}(Du^{\ve}) u^{\ve}_{ij}
\\
u^\ve(\cdot,0) = g,
\end{cases}
\end{equation}
where we have let $f = F'$, and
\begin{equation}
a^{\ve}_{ij}(\sigma)=  \delta_{ij} + (p-2) \frac{\sigma_i \sigma_j}{\ve^{2} + |\sigma|^{2}},\ \ \ \ i, j = 1,...,n.
\end{equation}
It is easily seen that for every $\sigma\in \Rn$ and every $\xi\in \Rn$ the following uniform ellipticity condition is satisfied, independently of $\ve>0$,
\begin{equation}\label{ue}
\min\{1,p-1\}\ |\xi|^2 \le a^{\ve}_{ij}(\sigma)\xi_i\xi_j  \leq \max\{1,p-1\}\ |\xi|^2.
\end{equation}
Proceeding as follows we first obtain a unique bounded  classical  solution $u^\ve$ to
\eqref{e:ap1}.

We let $M = ||g||_{L^{\infty}(\Rn)}$.  In correspondence of the initial datum $g$ we assume that the nonlinearity $F$ in \eqref{main5} satisfy the following hypothesis: there exist constants $q$, $M_{1}$, $M_{2}$, all depending on $M$, such that one has
\begin{equation}\label{assump}
\begin{cases}
 - q \leq M_{1}  \leq - M\ \text{and}\  M \leq M_2 \leq q,
\\
f(M_{1}) \leq 0 \leq   f(M_{2}). 
\end{cases}
\end{equation}
We remark immediately that assumption \eqref{assump} will be needed only in the case $p=2$, but not when $1<p < 2$. We also note that for the typical representatives of nonlinearities $f(u) = u^{3}-u$,  $f(u) = \sin\ u$ in  \eqref{main5} the assumption \eqref{assump} is satisfied.

Assuming \eqref{assump} let now $\tilde{F}$ be a compactly supported, $C^{2, \beta}(\R)$ function such that  $ \tilde{F} =  F$ for $ |u| \leq 2 q + 1$. We first suppose additionally that $g$ is smooth and has bounded derivatives of all orders. We  take a sequence of smooth domains $\Omega ^{N} \nearrow  \Rn$. Given any $T>0$, we consider the finite cylinders $\Om^N_T = \Om^N \times (0,T)$, and indicate with $\p_p \Om^N_T = (\p \Om^N \times (0,T))\cup (\Om^N \times \{0\})$ its parabolic boundary. For each $N\in \mathbb N$,   and $\ve>0$, we solve the Cauchy-Dirichlet  problem  
\begin{equation}\label{goodapp}
\begin{cases}
u^{\ve,N}_{t}  +  (\ve^2 + |Du^{\ve, N}|^2)^{1 - p/2} \tilde{F}'(u^{\ve, N}) =  a^{\ve}_{ij}(Du^{\ve,N}) u^{\ve,N}_{ij},\ \ \ \ \text{in}\ \Om^N_T,
\\
u^{\ve,N} = g\ \ \ \text{on}\ \p_p \Om^N_T\ \ \ (\text{one should keep in mind that}\ g(x,t) = g(x)).
\end{cases}
\end{equation}
The existence of classical solutions  $u^{\ve, N}$, such that  $\underset{\Om^N_T}{\sup}\ ||Du^{\ve, N}|| < \infty$, is guaranteed by  Theorem $4.2$, p. 559 in \cite{LU}. Because of the   boundedness of gradient,  one can see that $u^{\ve, N}$ satisfies an equation which obeys the  hypothesis of the comparison principle, Theorem $9.1$ in \cite{Li}. Moreover, because of  \eqref{assump} $M_{1}$ is a subsolution and $M_{2}$ is a  supersolution to such an equation. Therefore, from the comparison principle Theorem \ref{max mod} above we  conclude that  $|u^{\ve, N}|$  is bounded from above by $q$, which is independent of $N$ and $\ve$. Since $\tilde{F}'(s)=f(s)$ when $|s|\leq 2 q$, we infer that $u^{\ve, N}$  solves the Cauchy-Dirichlet problem  with $\tilde{F}'$ replaced by $f$. The rest of the proof for the existence of solutions $u^{\ve}$ to the Cauchy problem corresponding to \eqref{e:ap1} remains the same as for the case $F=0$, see \cite{BG1}. Since $F \in C^{2, \beta}_{loc}(\R)$, it follows from the Scahuder theory ( see Chapter 4  and Chapter 12 in \cite{Li}), that $u^{\ve} \in H_{3+\alpha}(\Rn \times [0, T])$ for some $\alpha > 0$ which depends on $\ve,p, n, q$ and $\beta$. We refer to Chapter 4 in \cite{Li} for relevant  notion of $H_{3+\alpha}$ spaces.

We note that the solutions $u^{\ve}$'s have  spatial gradient bounds, depending only on $n, p, q$ and $||Dg||_{L^{\infty}(\Rn)}$, which are uniform in $\ve$ for $\ve \leq 1$. This follows from Theorem 11.3 b)  in \cite{Li}. For this, one needs to observe that the limit behavior in  (11.17) in \cite{Li} is uniform in $\ve$, similarly to the case $F=0$. Now, as in the case $F=0$,  the uniform   bounds on the time derivatives of $u^{\ve}$, which depend only on the $C^{2}$ norm of $g$, can be  obtained by differentiating the approximating equations \eqref{e:ap1} with respect to the time variable and by applying Theorem \ref{max mod} above. Therefore, in the same way as for the case $F=0$, one can assert the existence of $u$ to \eqref{main5}  in the class $H_T$ when $g$ is smooth and has bounded derivatives of all orders.

In the case when $g$ is  only globally Lipschitz, we  take $\ve_{k}$-mollifications of $g$ for a sequence $\ve_{k}\to 0$, and call them $g_{k}$. Then, $g_{k}$ has bounded  derivatives of all order and
\begin{equation}\label{M101}
||g_{k}||_{L^{\infty}(\Rn)} \leq ||g||_{L^{\infty}(\Rn)},\ \ \ ||Dg_{k}||_{L^{\infty}(\Rn)} \leq ||Dg||_{L^{\infty}(\Rn)}.
\end{equation}
Let $u_{k}$ be the solution to the Cauchy problem corresponding to the initial datum $g_{k}$. As mentioned above, thanks to Theorem 11.3 in \cite{Li} ensures that $||Du_{k}||_ {L^{\infty}(\Rn \times (0, T))}$ is bounded  uniformly in $k$ by constants which depends only on $||Dg||_{L^{\infty}(\Rn)}$, $q$, $p$ and $n$. Since $g_{k} \to g$ uniformly in $\Rn$,  by the maximum modulus principle Theorem \ref{max mod} above we conclude that $u_{k} \to u$ uniformly in $\Rn \times [0, T]$, where $u$ is the unique solution to the Cauchy problem \eqref{main5} in the class $H_T$  corresponding to the  initial datum $g$.

\begin{rmrk}\label{assumpt1}
We note  that  the assumption \eqref{assump} is only used to assert a bound on $u^{\ve, N}$ independent of $\ve$ and $N$ as an intermediate step. If we  instead assume that  $f$ is bounded,  it turns out that  $w= ||g||_{L^{\infty}(\Rn)} + M_1t$  is a supersolution to the equation satisfied by $u^{\ve, N}$ when  $\ve \leq 1$  and   $M_1$ is chosen large enough depending only on  $||f||_{L^{\infty}(\Rn)}$. Hence, such $w$   can be used as a barrier for  $u^{\ve, N}$ from above and one can similarly bound $u^{\ve, N}$ from below by using $-w$ which is a subsolution to the same equation.
\end{rmrk}
\begin{rmrk}\label{assumpt2}
When $1<p < 2$, the assumption \eqref{assump}  is not needed. In that case, let $\tilde F$ be a $C^{2, \beta}$ compactly  supported function such that $\tilde F(s)=F(s)$ when $|s| \leq ||g||_{L^{\infty}(\Rn)} +  2$. Then, for each $\ve>0$,  we solve the corresponding  Cauchy-Dirichlet problem as before    in $\Om^N_T$ with $\tilde F'$ instead of $f$ and denote the corresponding solutions by $u^{\ve, N}$. For all $\ve$ small enough depending only on $p$, $f$, $T$ and $||g||_{L^{\infty}(\Rn)}$, it turns out that  $w= ||g||_{L^{\infty}(\Rn)} + \frac{t}{T}$  is a  supersolution to the equation satisfied by $u^{\ve, N}$ and hence can be used to assert boundedness of $u^{\ve, N}$ from above. Similarly, the  subsolution $-w$  can be used to assert  boundedness for $u^{\ve, N}$ from below. Therefore, for all such small enough $\ve$, it follows from the definition of $\tilde{F}$ that $u^{\ve, N}$ solves the Cauchy-Dirichlet problem with $\tilde F'$ replaced with $f$. The rest of the proof remains the same. This procedure does not work in the case $p=2$. This is because  when the  approximating equation \eqref{goodapp} is computed  for $w$, the term $\ve^{2-p}\tilde{F}'(w)$  does not go to zero as $\ve \to  0$  in the case  $p =2$ and therefore one cannot assert that $w$ is a supersolution to \eqref{goodapp}. Therefore, one interesting aspect is that for $1<p <2$, one has existence of solution to the Cauchy problem \eqref{main5} without  any growth  assumption on $f$ due to the special structure of the equation unlike  what one needs in the general theory of  uniformly parabolic equations, see for instance Theorem 12.16 in \cite{Li}.
\end{rmrk}

\section{Proof of the main results}\label{proofs}

We first  prove an intermediate  crucial result which  asserts  gradient estimates  for solutions to the  approximating Cauchy problems \eqref{e:ap1}.   For each $\ve>0$, we define
\begin{equation}\label{P:1}
P_{\ve}(u^\ve)(x, s) = \xi_{\ve}( |Du^\ve(x, s)|^{2}) - 2 F(u^\ve(x,s)),
\end{equation}
where  $u^{\ve}$ is  a solution to \eqref{e:ap1}, and we have let 
\begin{equation}\label{phixi}
\xi_{\ve} (s)= 2s\phi_{\ve}' - \phi_{\ve},\ \ \ \ \text{with}\ \ \phi_\ve(s)= \frac{2}{p}(\ve^{2} + s)^{p/2}.
\end{equation}

\begin{thrm}\label{aux}
Let $u^\ve$ be a solution of the approximating  equation \eqref{e:ap1} such that $u^\ve \in H_{3+\alpha}(\Rn \times [0, T])$ for some $\alpha > 0$.  If $P_\ve(u^\ve)(\cdot, 0) \leq 0 $, then  $P_\ve(u^\ve(x,t)) \leq 0$  for all $x\in \Rn$ and all $t\ge 0$.
\end{thrm}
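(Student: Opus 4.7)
The function $P_\ve$ is the natural regularization of the Modica-type quantity $\tfrac{2(p-1)}{p}|Du|^p - 2F(u)$, since $\xi_0(s) = \tfrac{2(p-1)}{p} s^{p/2}$. My plan is to prove that $P_\ve$ satisfies a linear uniformly parabolic differential inequality on $\Rn\times(0,T)$ with bounded coefficients, and then apply a maximum principle for unbounded domains. Since $u^\ve \in H_{3+\alpha}(\Rn\times[0,T])$, each of $u^\ve$, $Du^\ve$, $D^2 u^\ve$ and $u^\ve_t$ is globally bounded and H\"older continuous; in particular $P_\ve$ is bounded and $C^1$ in $(x,t)$.

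\textbf{Step 1: the differential inequality.} Let $Q = |Du^\ve|^2$ and let $\mathcal L v = v_t - a^\ve_{ij}(Du^\ve) v_{ij}$ denote the linearized parabolic operator associated to \eqref{e:ap1}, which is uniformly parabolic thanks to \eqref{ue}. Differentiating \eqref{e:ap1} with respect to $x_k$, multiplying by $2\xi_\ve'(Q)u^\ve_k$ and summing in $k$, yields an evolution equation for $\xi_\ve(Q)$ of Bochner type, whose main dangerous contribution is proportional to $\xi_\ve'(Q)\,Q\,(\ve^2+Q)^{1-p/2}\,f'(u^\ve)$, together with a nonnegative quadratic form in $D^2 u^\ve$. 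A direct computation using \eqref{e:ap1} also gives
\[
\mathcal L F(u^\ve) = -(\ve^2+Q)^{1-p/2} f(u^\ve)^2 - f'(u^\ve)\, a^\ve_{ij}(Du^\ve)\, u^\ve_i u^\ve_j.
\]
The algebraic identity $\xi_\ve(s) = 2s\phi_\ve'(s) - \phi_\ve(s)$, equivalently $\xi_\ve'(s) = (\ve^2+s)^{p/2-2}\bigl(\ve^2 + (p-1)s\bigr)$, is chosen precisely so that in $\mathcal L P_\ve = \mathcal L \xi_\ve(Q) - 2\mathcal L F(u^\ve)$ the $f'(u^\ve)$-contributions cancel up to a remainder absorbable into $B_k P_{\ve,k} + C P_\ve$, while the quadratic-in-$D^2 u^\ve$ terms are controlled by Cauchy--Schwarz using the trace identity $a^\ve_{ij} u^\ve_{ij} = u^\ve_t + (\ve^2+Q)^{1-p/2} f(u^\ve)$ extracted from the PDE. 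The outcome is
\[
\mathcal L P_\ve \;\le\; B_k(x,t)\, P_{\ve,k} + C(x,t)\, P_\ve \quad \text{on } \Rn\times(0,T),
\]
with $B_k$ and $C$ bounded continuous on $\Rn\times[0,T]$, depending on $\ve$, on the $H_{3+\alpha}$-norm of $u^\ve$, and on $f$, $f'$.

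\textbf{Step 2: maximum principle on $\Rn\times[0,T]$.} Because $P_\ve$ is globally bounded and satisfies a linear uniformly parabolic inequality with bounded coefficients, a Phragm\'en--Lindel\"of type barrier argument applies. For $\eta>0$ and $\lambda$ sufficiently large depending on $\|B_k\|_\infty$ and $\|C\|_\infty$, the function $\Psi(x,t) = \eta e^{\lambda t}(1+|x|^2)^{1/2}$ satisfies $\mathcal L \Psi - B_k \Psi_k - C\Psi > 0$, so $P_\ve - \Psi$ cannot attain a positive interior maximum on $\Rn\times(0,T]$. Since $\Psi \to \infty$ as $|x|\to\infty$ while $P_\ve$ is bounded, and $P_\ve(\cdot,0) \le 0 \le \Psi(\cdot,0)$ by hypothesis, we conclude $P_\ve \le \Psi$ on $\Rn\times[0,T]$, and letting $\eta\to 0$ gives $P_\ve \le 0$ everywhere.

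\textbf{Main obstacle.} The delicate step is clearly the algebra in Step 1: one must organize the Bochner-type identity for $\xi_\ve(Q)$ so that, when combined with $-2\mathcal L F(u^\ve)$, the resulting expression is genuinely of the claimed form with bounded coefficients. The cancellation of the $f'(u^\ve)$-term is a finite-$\ve$ refinement of Modica's classical identity and relies on the explicit choice of $\xi_\ve$ in terms of $\phi_\ve$; moreover, the $\ve$-dependent factors $(\ve^2 + Q)^{p/2-2}$ arising from $\phi_\ve''$ must be tracked carefully, using the restriction $1 < p \le 2$, to ensure the quadratic remainder can be absorbed into $C P_\ve$ and that the coefficients remain bounded for each fixed $\ve > 0$.
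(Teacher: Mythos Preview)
Your overall plan---derive a parabolic differential inequality for $P_\ve$ and then invoke a maximum principle on $\Rn\times[0,T]$---is the right strategy, and you correctly identify the exact cancellation of the $f'(u^\ve)$ contributions coming from the choice $\xi_\ve' = \Lambda$. However, the central claim of Step~1, that one can write
\[
\mathcal L P_\ve \le B_k P_{\ve,k} + C P_\ve
\]
with \emph{globally bounded} $B_k$ and $C$, is false, and this is the genuine gap in your argument. After the $f'$ cancellation, the computation still leaves the positive zeroth-order term $2(\ve^2+Q)^{1-p/2}f(u^\ve)^2$ (equivalently $2f^2/\Lambda$ in the paper's normalization). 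The only mechanism available to absorb it is the nonnegative Hessian term $2a_{ij}u_{ki}u_{kj}$. The trace inequality you propose, $(a^\ve_{ij}u_{ij})^2 \le \operatorname{tr}(a^\ve)\, a^\ve_{ij}u_{ki}u_{kj}$, yields only $\frac{\Lambda(\ve^2+Q)^{1-p/2}}{\operatorname{tr}(a^\ve)}\,f^2 \le \frac{1}{n-1}\cdot(\text{bad }f^2\text{ term})$, which is not enough for $n\ge 2$. The Cauchy--Schwarz that \emph{does} produce exact cancellation is the projection onto $Du$,
\[
a_{ij}u_{ki}u_{kj}\ \ge\ \frac{\Lambda\, u_{ik}u_i\, u_{jk}u_j}{|Du|^2}\ =\ \frac{|DP_\ve + 2fDu^\ve|^2}{4\Lambda |Du^\ve|^2},
\]
and this cancels $2f^2/\Lambda$ exactly---but the surviving first-order remainder $\dfrac{f\langle Du^\ve, DP_\ve\rangle}{\Lambda|Du^\ve|^2}$ has coefficient $\dfrac{f u^\ve_k}{\Lambda|Du^\ve|^2}$, which blows up as $Du^\ve\to 0$. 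There is no way to absorb this into $CP_\ve$ either, since $P_\ve$ can vanish while $f(u^\ve)\neq 0$.

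The paper circumvents this precisely by \emph{not} seeking a global linear inequality. Instead it introduces $w = P_\ve - \frac{M}{R}\sqrt{|x|^2+1} - \frac{ct}{R^{1/2}}$, assumes by contradiction that $w$ attains an interior maximum exceeding $K=R^{-p/2}$, and observes that at such a point $|Du^\ve|^p > K/2$, hence $|Du^\ve|\ge cR^{-1/2}>0$. This lower bound is exactly what tames the $|Du^\ve|^{-2}$ factor \emph{at the maximum point}; combined with $DP_\ve = O(M/R)$ there, the singular first-order term is controlled by $O(R^{-1/2})$, and a suitable choice of $c$ forces $(d_{ij}w_i)_j>0$, a contradiction. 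In short, the argument is pointwise at the maximum, not a global differential inequality with bounded coefficients. Your Step~2 barrier is fine; it is Step~1 that needs to be reorganized along these lines (or, alternatively, you could restrict attention to the open set $\{P_\ve > -\delta\}$, on which $\xi_\ve(Q)>-\delta$ forces a uniform positive lower bound on $|Du^\ve|$, and run a maximum principle there).
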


\begin{rmrk}
Note that, when the initial datum $g$ has bounded derivatives of sufficiently high order (up to  order five), then the solutions $u^{\ve}$ constructed in Section \ref{existence1} satisfy the hypothesis of Theorem \ref{aux}.
\end{rmrk}

\begin{proof}[Proof of Theorem \ref{aux}]
Henceforth, we will routinely omit $\ve$-subscripts and superscripts, and suppress the dependence of $P$ on $u$. Thus, for instance, we will write $u$ instead of $u^\ve$, $P$ instead of $P_\ve(u^\ve)$. We will also write $\phi$ and $\xi$, instead of $\phi_\ve$ and $\xi_\ve$ like in \eqref{phixi}.  Note that the approximating equation can be rewritten as 
\begin{equation}\label{approx}
\operatorname{div} (\phi'(|Du|^2)Du)= f(u) + \phi'(|Du|^{2})u_{t}.
\end{equation} 
We let $\Lambda = \xi'$, and note that for  each $\ve > 0 $ we have from \eqref{phixi}
\begin{equation}\label{e:lambda}
\Lambda  = ( \ve^2 + |Du|^{2})^{p/2 - 2}( \ve^2 + (p-1)|Du|^{2}) > 0.
\end{equation}
We next write  \eqref{approx} in the following manner
\begin{equation*}
a_{ij} (Du)\ u_{ij}= f(u) + \phi'\  u_{t},
\end{equation*}
where   
\begin{equation}\label{nondiv}
a_{ij}= 2\phi'' u_{i}\ u_{j} + \phi' \delta_{ij}.
\end{equation}
Therefore, $u$ satisfies
\begin{equation}\label{e:650}
d_{ij}\  u_{ij} = \frac{f}{\Lambda} + \frac{\phi'}{\Lambda} u_{t},
\end{equation}
where $d_{ij}= \frac{a_{ij}}{\Lambda}$.
By differentiating \eqref{nondiv} with respect to $x_{k}$, we obtain
\begin{equation}\label{f:135}
(a_{ij}\ (u_k)_{i})_{j}= f'\ u_k  + \phi' \ u_{tk} + 2 \phi''\ u_{hk}\ u_{h}\ u_{t}.
\end{equation}
From the definition of $P$  in \eqref{P:1} we have,
\begin{equation}\label{P1}
P_{i} = 2 \Lambda u_{ki}\  u_{k} - 2 f\ u_{i},\ \ \ P_{t} = 2 \Lambda u_{kt}\ u_{k} - 2f\  u_{t}.
\end{equation}

We  now consider the following auxiliary function
\begin{equation*}
w = w_R = P -  \frac{M}{R} \sqrt{|x|^2 + 1} -   \frac{ct}{R^{1/2}},
\end{equation*}
where $R > 1$ and  $M$, $c$ are to be  determined subsequently. 
Note that $P \ge w$ for $t\ge 0$.
Consider the cylinder  $ Q_{R}=  B( 0 , R) \times  [0, T]$. One can see that  if $M$  is   chosen large enough,  depending on the $L^{\infty}$ norm of $u$ and its first derivatives, then $ w < 0 $ on the lateral boundary of $Q_{R}$. In this situation we see that if $w$ has a strictly positive maximum at a point $(x_0, t_0)$,
then such point cannot be on the parabolic boundary of $Q_R$. In fact, since $w<0$ on the lateral boundary, the point cannot be on such set. But it cannot be on the bottom of the cylinder either since at $t=0$ we have  $ w(\cdot,0) \leq P(u(\cdot,0)) = P(g) \leq 0$, where in the last inequality we have used the hypothesis.

Our objective is to prove the following claim: 
\begin{equation}\label{claim}
w \le K \overset{def}{=} R^{- \frac p2},\ \ \ \ \text{in}\ Q_R,
\end{equation}
provided that $M$ and $c$ are chosen appropriately.
This claim will be established in \eqref{claim2} below. 
We  first  fix  a point  $(y, s)$ in $\Rn$. Now for all  $R$ sufficiently  large enough, we  have that  $(y, s) \in Q_{R}$. We would like to emphasize over here that  finally we  let $R \to \infty$.  Therefore, once \eqref{claim} is established, we obtain from it  and the definition of $w$ that
\begin{equation}\label{e:68}
P(u)(y, s) \leq  \frac{K'}{R^{1/2}},
\end{equation}
where $K'$ depends on $\ve, (y,s)$ and the bounds of the derivatives of $u$  of order three. By letting  $ R \to \infty$ in \eqref{e:68}, we find  that  
\begin{equation}\label{est}
P(u)(y, s) \leq 0. 
\end{equation}
The sought for conclusion thus follows  from the arbitrariness of the point $(y, s)$.

In order to prove the claim \eqref{claim} we argue by contradiction and suppose that there exist $(x_0,t_0)\in \overline Q_{R}$ at which $w$ attains it maximum and for which 
\begin{equation*}
w(x_0, t_0) > K.
\end{equation*}
This implies that $(x_{0}, t_{0})$ is not on the parabolic boundary of $Q_{R}$. Note that from the definition \eqref{P:1} of $P$, we have
\[
\frac 12 P = (\ve^2 + |Du|^2 )^{\frac p2 - 1} \bigg[\frac{1}{p'}|Du|^2 - \frac{\ve^2}{p}\bigg] - F(u).
\]
Since $1<p\le 2$, we have $2\le p'<\infty$, and so $\frac{1}{p'} \le \frac 12<1$. Thus, at every point of $Q_R$ we have
\[
\frac 12 w \le \frac 12 P \le \frac{1}{p'}(\ve^2 + |Du|^2 )^{\frac p2 - 1} |Du|^2 < (\ve^2 + |Du|^2 )^{\frac p2 - 1} |Du|^2.
\]
It follows that at $(x_0,t_0)$ we must have
\begin{equation}\label{expl}
 (\ve^2 + |Du(x_0,t_0)|^2 )^{\frac p2 - 1} |Du(x_0, t_0)|^2\geq  \frac{1}{2} P (x_0, t_0) \geq \frac{1}{2} w (x_0, t_0)>  \frac{1}{2} K,
\end{equation}
which implies, in particular, that $Du(x_0, t_0) \neq 0$. Therefore, since $1 < p \leq 2$,  we obtain from \eqref{expl}
\begin{equation}\label{est5}
|Du(x_0,t_0)|^{p} \geq  ( \ve^2 + |Du(x_0,t_0)|^2 )^{p/2 - 1} |Du(x_0,t_0)|^2 \geq  \frac{1}{2} P (x_0, t_0) >  \frac{1}{2} K.
\end{equation}
On the other hand, since $(x_0,t_0)$ does not belong to the parabolic boundary, from the hypothesis that $w$ has its maximum at such point, we conclude that $w_t(x_0,t_0) \ge 0$ and $Dw(x_0,t_0) = 0$. These conditions translate into
\begin{equation}\label{M1}
P_{t} \geq  \frac{c}{R^{1/2}},
\end{equation} 
and
\begin{equation}\label{M2}
P_{i} =  \frac{M} {R} \frac{x_{0,i}}{ (|x_{0}|^2 + 1 )^{1/2}}.
\end{equation}
Now
\begin{equation*}
(d_{ij}w_{i})_{j} =   (d_{ij} P_{i})_{j} - \frac{M}{R} (d_{ij}  \frac{x_{i}}{ (|x|^2 + 1 )^{1/2}})_{j},
\end{equation*}
where
\begin{equation}\label{calc1}
(d_{ij}P_{i})_{j} = 2 ( \frac{ a_{ij}}{\Lambda} ( \Lambda u_{ki}\ u_{k} - f\ u_{i}))_{j} = 2( a_{ij}\ (u_{k})_{i}\  u_{k})_{j} - 2 (f\ d_{ij}\ u_{i})_{j}.
\end{equation}
After a simplification, \eqref{calc1} equals
\begin{equation*}
2 a_{ij}\  (u_{ki})_{j}\ u_{k} + 2 a_{ij}\ u_{ki}\ u_{kj} - 2 f'\ d_{ij}\  u_{i}\ u_{j} - 2 f\  d_{ij}\ u_{ij} - 2f\ (d_{ij})_{j}\ u_{i}.
\end{equation*}
We notice that
\begin{equation*}
d_{ij}u_{i}u_{j} = \frac{ 2 \phi^{''} u_{i}\ u_{j}\ u_{i}\ u_{j}  + \phi'\ \delta_{ij}\ u_{i}\ u_{j}}{\Lambda} = |Du|^2.
\end{equation*}
Now by using \eqref{f:135} and by  cancelling  the  term  $2f'|Du|^2$, we get that the right-hand side in \eqref{calc1} equals
\begin{equation*}
{\ds 2 \phi' u_{tk}\ u_{k}  + 4 \phi^{''}\ u_{hk}\ u_{h}\ u_{k}u_{t} + 2 a_{ij}\ u_{ki}\ u_{kj} - 2 f d_{ij}\ u_{ij} - 2 f d_{ij, j}\  u_{i}}.
\end{equation*}
 Therefore by using the equation \eqref{e:650},  we obtain 
\begin{align}\label{e:100}
(d_{ij}P_{i})_{j} & = 2 a_{ij}\ u_{ki}\ u_{kj}  +  2\phi^{'}\ u_{tk}\ u_{k} + 4 \phi^{''}\ u_{hk}\ u_{h}\ u_{k}\ u_{t}
\\   
& -  2 \frac{f^2}{\Lambda}  - 2 \frac{ f\ \phi^{'}\ u_t}{\Lambda} - 2fd_{ij,j}\ u_{i}.
\notag
\end{align}
By using the extrema conditions \eqref{M1}, \eqref{M2}, we have the following two conditions at $(x_0, t_0)$ 
\begin{equation}\label{M3}
u_{kh}\ u_{k}\ u_{h} = \frac{f}{\Lambda} |Du|^{2} +  \frac{M}{2 R \Lambda} \frac{x_{h}\  u_{h}}{(|x|^2 + 1)^{1/2}},
\end{equation}
\begin{equation}\label{M4}
2 \Lambda\  u_{kt}\ u_{k}  \geq   2 f u_{t}  + \frac{c}{R^{1/2}}.
\end{equation}
Using the extrema conditions   and by canceling $ 2 \phi^{'} u_{tk}u_{k} $ we  obtain,
\begin{align}\label{11}
(d_{ij} w_{i})_{j}   \geq   & 2a_{ij}\ u_{ki}\ u_{kj}  +   \frac{4 \phi^{''}\ f}{\Lambda} |Du|^{2} u_{t}  -  \frac{2 f^{2}}{\Lambda} - 2 f d_{ij, j}\ u_{i}
\\
 + &  \frac{2 \phi^{''}\ M\ x_h\ u_h\ u_t}{R\ \Lambda\ (|x|^2 + 1)^{1/2}} + \frac{c\ \phi'}{R^{1/2} \Lambda}  -  \frac{M}{R} ( d_{ij} \frac{x_{i}}{ (|x|^2 + 1)^{1/2}})_{j}.
 \notag
 \end{align}
Now  we  have the following  structure equation, whose proof is lengthy but straightforward,
\begin{equation}\label{e:100}
d_{ij, j } u_i = \frac{2 \phi^{''}} {\Lambda} ( |Du|^{2} \Delta u - u_{hk}\ u_h\  u_k).
\end{equation}
Using \eqref{M4} in \eqref{e:100}, we  find
\begin{equation*}
d_{ij, i}\ u_i= \frac{2 \phi^{''} |Du|^2}{\Lambda} (  \Delta u -   \frac{f}{\Lambda} -  \frac{M\ x_h\  u_h}{ 2 R\ |Du|^2\   \Lambda (|x|^2 + 1 )^{1/2}}).
\end{equation*}
Using the equation \eqref{approx},  we have 
\begin{equation*}
2 \phi^{''}\ u_{hk}\ u_h\  u_k + \phi'\  \Delta u  = f + \phi'\  u_t.
\end{equation*}
 Therefore,
\begin{equation}\label{M100}
\Delta\  u = \frac{ f + \phi'\  u_t - 2 \phi^{''}\  u_{hk}\ u_h\  u_k}{\phi'}.
\end{equation}
Substituting the value for $\Delta u$ in \eqref{M100}  and by using the extrema condition \eqref{M4}, we have the following equality  at $(x_0, t_0)$,
\begin{align}\label{e:60}
d_{ij, j}\ u_i  &  = \frac{2 \phi^{''}\ |Du|^2} {\Lambda\ \phi' }\bigg[ f + u_t\ \phi'  - 2 \phi^{''}\ \frac{|Du|^2}{\Lambda} f  - f  \frac{\phi'}{\Lambda}
\\ 
 &  - \frac{  \phi^{''}\ M\ x_h\  u_h } { R \Lambda (|x|^2 + 1 )^{1/2}}-  \frac{M\ x_h\  u_h\ \phi'}{  2 R\ |Du|^2\  \Lambda\  (|x|^2 + 1 )^{1/2}}\bigg].
 \notag
 \end{align}
Using the definition of $\Lambda$  and cancelling terms in \eqref{e:60}, we have that  the right-hand side  in \eqref{e:60}  equals
\begin{equation}\label{e:61}
2 \phi^{''}\frac{ |Du|^2 u_t }{\Lambda} -   \frac{ \phi^{''} M\ x_h\ u_h }{ \Lambda^2\  R\ (|x|^ 2 + 1 )^{1/2}}  -    \frac{ 2 (\phi^{''})^2\ |Du|^2\ M\ x_h\  u_h } {  R\ \Lambda^2\ \phi^{'}\  (|x|^2 + 1 )^{1/2}}. 
\end{equation}
Therefore, by  canceling the terms $4 \phi^{''} f \frac{|Du|^ 2 u_t }{ \Lambda}$ in \eqref{11}, we  obtain the following differential inequality at $(x_0, t_0)$,
\begin{align}\label{e:62}
(d_{ij}w_{i})_{j} \ge &  \frac{c\ \phi'} {  R^{1/2}\ \Lambda} -   \frac{2\ f^2}{\Lambda}   - \frac{M}{R}\  ( d_{ij}\ \frac{x_{i}}{ (|x|^2 + 1)^{1/2}})_{j}  +    \frac{2 \phi^{''}\ M\ x_h\ u_h\ u_t}{R\ \Lambda (|x|^2 + 1)^{1/2}} 
\\  
& + \frac{2 f\ \phi^{''} M\ x_h\ u_h }{ \Lambda^2\ R\ (|x|^ 2 + 1 )^{1/2}} +   \frac{4 f\ (\phi^{''})^2\ |Du|^2\ M\ x_h\  u_h } {R\ \Lambda^2\ \phi^{'}\ (|x|^2 + 1 )^{1/2}} + 2 a_{ij}\ u_{ki}\ u_{kj}.
\notag
\end{align}

Now by using the identity  for $ DP$ in \eqref{P1} above, we have
\begin{equation}\label{e:63}
{\ds u_{ki}\ u_{kj}\ u_{i}\ u_{j}= \frac{( P_k + 2 fu_k ) ^{2}} { 4  \Lambda ^ {2} }}.
\end{equation}
Also,
\begin{equation*}
a_{ij}\  u_{kj}\  u_{ki}  = \phi'\  u_{ik}\ u_{ik}  + 2 \phi^{''}\ u_{ik}\ u_{i}\ u_{jk}\ u_{j}.
\end{equation*}
Therefore, by Schwarz  inequality, we have
\begin{equation*}
a_{ij}\ u_{kj}\ u_{ki} \geq  \phi'  \frac{ u_{ik}\ u_{jk}\  u_i\  u_ j}{  |Du|^2 } + 2 \phi^{''}\  u_{ik}\ u_i\  u_{jk}\ u_{j}   = \frac{\Lambda u_{ik}\ u_{i}\ u_{jk}\ u_{j}}{|Du|^2}.
\end{equation*}
Then, by using \eqref{e:63}  we find
\begin{equation}\label{e:64}
a_{ij}\ u_{kj}\ u_{ki} \geq \frac{(P_{k} + 2fu_k)^{2}}{4 \Lambda |Du|^{2}} =  \frac{  |DP|^2 + 4 f^2 |Du|^2 + 2 f < Du,DP> }{  4 |Du|^ 2 \Lambda }.
\end{equation}
At this point, using \eqref{e:64} in \eqref{e:62}, we can cancel off $\frac{2f^2}{\Lambda}$ and  consequently obtain the following inequality at  $(x_0, t_0)$,
\begin{align}\label{e:65}
(d_{ij}w_{i})_{j} \geq & \frac{c \phi'} {  R^{1/2} \Lambda}   +   \frac{ f < Du, DP> }{  |Du|^2 \Lambda }   - \frac{M}{R}\ ( d_{ij}\ \frac{x_{i}}{ (|x|^2 + 1)^{1/2}})_{j} + \frac{2\ \phi^{''}\ M\ x_h\ u_h\  u_t}{R\ \Lambda (|x|^2 + 1)^{1/2}}  
\\
& + \frac{4 f\ (\phi^{''})^{2}\ |Du|^{2} M\ x_h\  u_h } {   R\ \Lambda^{2}\ \phi' (|x|^2 + 1 )^{1/2}}   +  \frac{2 f\ \phi^{''} M\ x_h u_h }{ \Lambda^2\ R\ (|x|^ 2 + 1 )^{1/2}}.
\notag
\end{align}
By assumption, since  $ w(x_0, t_0) \geq K$, we have that 
\begin{equation*}
 |D u |  \geq \frac{1}{2^{1/p}R^{1/2}}.
\end{equation*}
Moreover, since  $u$ has  bounded derivatives of  upto order 3, for a fixed $\ve > 0$, we  have that    $\phi' $ and  $\Lambda$ are  bounded from below by a positive constant. Therefore by \eqref{M2},  the term $ \frac{ f < Du, DP> }{  |Du|^2 \Lambda }$ can be controlled from below by   $ - \frac{ M^{''}}{ R^{1/2}}$ where $M^{''}$ depends on $\ve$ and  the bounds of the derivatives of $u$. Consequently, from \eqref{e:65}, we have at $(x_0, t_0)$, 
\begin{equation}
(d_{ij}w_{i})_{j} \geq     \frac{C(c)} { R^{1/2}} -   \frac{L(M)} {R} -  \frac{ M''}{ R^{1/2}}.
\end{equation}
Now in the very first place, if  $c$ is chosen large enough  depending only on $\ve$ and  the bounds of the derivatives of $u$ up to order three,  we would have the following inequality $ \text{at}\ (x_0, t_0)$, 
\begin{equation*}
(d_{ij}w_{i})_{j}   > 0.
\end{equation*}
This contradicts the fact that $w$ has a maximum at  $(x_{0}, t_{0})$.
Therefore, either  $ w (x_{0}, t_{0}) < K$, or    the maximum of $w$  is achieved on the parabolic boundary where $w < 0$.  In either case, for an arbitrary point $(y,s)$ such that $|y| \leq R$,  we  have that 
\begin{equation}\label{claim2}
w(y, s) \leq \frac{1} {R^{p/2}}.
\end{equation}

\end{proof}

\begin{proof}[Proof of Theorem \ref{1d}]
Let  $g_{k}$ be the $\ve_{k}$  mollifications of $g$ which converges to $g$ uniformly in $\Rn$ as $k \to \infty$. Note that $g_{k}$ has bounded derivatives of all orders with bounds depending on $\ve_{k}$. Given any $\delta >0$, we  note that for large enough $k$, $g_{k}$ satisfies  \eqref{e:main1} with $F$ replaced by $G= F + \delta$. This can be seen as follows:
\begin{equation}\label{h3}
|Dg_{k}(x)|= | \int_{\Rn} Dg (x-y) \rho_{\ve_k} (y) dy | \leq \int_{\Rn} |Dg(x-y)| \rho_{\ve_k} (y) dy
\end{equation}
See for instance Theorem 6.25 in \cite{R}. We choose to cite  this reference since the integrals considered in \eqref{h3} are vector valued and we need to make sure that no additional constants are incurred in front of the last integral in \eqref{h3}. Therefore,
\begin{equation}\label{h}
|Dg_{k}(x)|^{p} = | \int_{\Rn} Dg (x-y) \rho_{\ve_k} (y) dy |^{p} \leq  \int_{\Rn} |Dg(x-y)|^{p} \rho_{\ve_k} (y) dy
\end{equation}
The last inequality in \eqref{h} follows from \eqref{h3} and  Jensen inequality. Now  since $|Dg|^{p} \leq \frac{p}{p-1} F(g)$ a.e., we have  for all $k$ large enough, 
\begin{equation}\label{h2}
|Dg_{k}(x)|^{p} \leq \frac{p}{p-1} \int_{\Rn} F( g(x-y)) \rho_{\ve_k} (y) dy \leq \frac{p}{p-1}\underset{B_{\ve_k}(x)}{\sup}\ F(g) \leq\frac{p}{p-1}( F(g_k(x)) + \delta).
\end{equation}
In the last inequality in \eqref{h2}, we have made use of the fact that $g_{k}$ converges to $g$ uniformly in $\Rn$ since $g$ is globally Lipschitz. This justifies the claim above.

Now for each such $k$, let $u^{\ve}_{k}$ be the solution to the Cauchy problem corresponding to equation \eqref{e:ap1} with initial datum $g_{k}$. We furthermore assume that for $1 < p < 2$, $\ve$ is small enough so that the conditions in Remark \ref{assumpt2} is satisfied.
We now  note that for $1 < p \leq 2$, 
\begin{equation}
 \tilde{P_{\ve}} \leq  \tilde{P},
\end{equation}
where $\tilde{P}, \tilde{P_{\ve}}$  are  defined  as in \eqref{P:1}. Therefore,  since  $\tilde{P}(g_{k}) \leq 0$,  we have that $\tilde{P_{\ve}}(g_{k}) \leq 0$.  Theorem \ref{aux} applied to $u^{\ve}_{k}$ implies that $\tilde{P_{\ve}}(u^{\ve}_{k}) \leq 0$ for all positive times.
Now, by Dini's theorem the functions $h_{\ve}( x) = (\ve ^ 2 + |x|^2)^{p/2 - 1} |x|^{2}$  $\to$  $|x|^{p}$, uniformly on compact sets. Thus, because of  uniform bounds on the gradients, given any $\gamma > 0 $ for all small enough $\ve$ we have that at each time level  $t$,
\begin{equation}\label{f:50}
\frac{p-1}{p} |Du^{\ve}_{k}|^{p}  \leq  G(u^{\ve}_{k})   + \gamma.
\end{equation}
Integrating \eqref{f:50} over an open ball  $B_{r}= B_{r}(x)$  where $x$ is any arbitrary point, by using lower semicontinuity on the left-hand  side, and  by  passing to the limit in $\ve$ on the right-hand side, and then by letting $\gamma  \to 0$, we find 
\begin{equation}\label{f:51}
\frac{p-1}{p} \int_{B_r} |Du_{k}|^{p}  \leq \int_{B_{r}} G(u_{k}). 
\end{equation}
Now by the maximum modulus principle, Theorem \ref{max mod}, $u_{k} \to u$ uniformly in $\Rn$ and weakly in $W^{1,p}_{loc}(\Rn)$ at any given time $t$ where $u$ is the solution to the Cauchy problem with initial datum $g$. Therefore in \eqref{f:51},   by using lower semicontinuity on the left hand side and by passing to the limit in $k$ on the right hand side, we have that \eqref{f:51} holds for $u$.  Then from the Lebesgue differentiation theorem, it follows that  at  a given time level $t$, 
\begin{equation}\label{e:210}
\frac{p-1}{p} |Du|^{p}  \leq  F(u)   + \delta \ \ \ \ \text{a.e. in}\ \Rn.
\end{equation}
By letting $\delta \to 0$, we reach the desired  conclusion.

\end{proof}

\begin{proof}[Proof of Theorem \ref{2d}]
Since by hypothesis, $g$ has bounded derivatives of upto order $2$, we  have by an application of maximum  principle   as described in Section \ref{existence1}, that  the solutions $u^{\ve}$ to the approximating equations \eqref{e:ap1} are such that $|Du^{\ve}|$ and $|u^{\ve}_{t}|$ are  bounded from above by  constants  which are independent of $\ve$.  Therefore, in the case $n=2 $, we see that, because of uniform bounds on the space and time  derivatives, at each time level $t$ the solutions $u^{\ve}$ to the approximating equations  \eqref{e:ap1} solve a  uniformly elliptic linear PDE in non-divergence form with right-hand side uniformly bounded in $\ve$. Therefore, from Theorem 12.4 in \cite{GT} (see also \cite{T}),  it follows   that $Du^{\ve}(\cdot, t)$  has uniform H\"{o}lder bounds independent of $\ve$ with an exponent $\alpha$ which only depends on $p$. Consequently,   $Du(\cdot, t)$  is H\"{o}lder continuous in $x$ and the conclusion follows. 
 
\end{proof}

\noindent  Now we turn our attention to the proof of Theorem \ref{3d} which  is similar to that for the elliptic case in \cite{CGS}.
\begin{proof}[Proof of Theorem \ref{3d}]
Via an approximation argument as used before in the proof of Theorem \ref{1d}, we can   assume that the the initial datum $g$  has bounded  derivatives of  sufficiently high order. Let $u^{\ve}$ be the solution to \eqref{e:ap1} corresponding to initial datum $g$.
We consider   the function 
\begin{equation*}
\psi_{\ve}(s) = u^{\ve}(x_{1} + s \omega,t_0) - u^{\ve}(x_{1},t_0)
\end{equation*}
for  some $x_{1} \in \Rn$  where $\omega$ is some unit direction. The point $x_1$ is going to be chosen appropriately later. From the definition, we have that $\psi_\ve(0) = 0$ and
\begin{equation*}
 |\psi'_{\ve}(s)| \leq |Du^{\ve}( x_{1} + s\omega,t_0)|.
\end{equation*}
We  now  define the function 
\begin{equation*}
\xi^{\ve} (s) = 2 s \phi'_{\ve} - \phi_{\ve}  + \frac{2}{p} \ve^2.
\end{equation*}
For  $\delta$ small  enough, let
\begin{equation*}
 G_{\ve}=  \xi^{\ve} -  \delta (\ve^2 + s)^{p/2}. 
\end{equation*}
Clearly, $G_{\ve}( 0 ) =  - \delta \ve^p$ and by the ellipticity it is easily seen that $G'_{\ve} \geq 0$. This implies that
\begin{equation}\label{just}
G_{\ve}(s) \geq - \delta \ve^p
\end{equation}
Therefore from \eqref{just} and the definition of $G_{\ve}$, given any $\gamma >0$, for small enough $\ve$, 
\begin{equation*}
|Du^{\ve}(x_{1}  + s\omega,t_0)|^{p} \leq  C  \xi^{\ve} ( |Du^{\ve}(x_{1} + s\omega,t_0)|^2)   + \delta \ve^{p}   + \gamma.
\end{equation*}
By applying Theorem \ref{aux}, we thus obtain
\begin{equation}\label{e:140}
|\psi'_{\ve}(s)|^{p} \leq C ( F (  u^{\ve}(x_{1} + s \omega,t_0))   + k(\ve)  + \gamma,
\end{equation}
where $k(\ve) \to  0$ as $\ve \to 0$. Repeating the arguments in the proof of Theorem \ref{1d}, and finally letting $\gamma \to 0$, we obtain
\begin{equation}\label{e:141}
|\psi' (s)|^{p} \leq  C  F(u(x_{1}+ s\omega, t_0)) \ \ \  \text{a.e. in  s }
\end{equation}
where 
\begin{equation*}
\psi (s) = u(x_{1} + s\omega,t_0) ) - u(x_{1},t_0).
\end{equation*}
Now suppose that  $F(u(x_{0}, t_{0}))= 0 $, and let  $u_{0}= u(x_{0}, t_{0})$. Indicating with $\Pi_{x}$ the projection onto the $x$-component, consider the set  $V=  \Pi_{x} (u^{-1}( u_{0}) \cap \Rn \times \{ t_{0}\})$, and let $x_1 \in V$. Clearly, $V$ is closed.  Since  $F \geq 0 $ and  $F(u_{0}))= F(u(x_{1}, t_0))= 0 $, we have that 
\begin{equation}
F(u-u_0)= O((u-u_0)^2)
\end{equation}
 Hence for $s$ small enough,
\begin{equation*}
 F(u(x_{1} + s\omega, t_0)) \leq  K |\psi(s)|^2.
\end{equation*}
Therefore from \eqref{e:141}, we have for all such $s$  in a small enough interval which does not depend on $\omega$,
\begin{equation*}
|\psi'(s)| \leq C|\psi(s)| \\ \\\ \text{a.e.}
\end{equation*}
This implies  $\psi= 0$ in  that same interval. Since $\omega$ is arbitrary, this implies that $V$ is open   and hence equals the whole of  $\Rn$. The desired conclusion  thus follows.

\end{proof}

\begin{rmrk}
We  would like the reader to note that the reason  for which we  employ the regularization scheme $ u^{\ve}$'s which are solutions to \eqref{e:ap1}  in the proof of Theorem \ref{3d} as an intermediate step  is because  we can only assert that  the corresponding gradient estimate \eqref{e:main} for $u$   holds a.e  in $\Rn$.  Therefore, it need not hold on the $1$ dimensional  line $[x+ s \omega: s \in \R]$.
\end{rmrk}

\end{document}